\newtheorem{thm}{ Theorem}[section]
\newtheorem{lem}[thm]{Lemma}
\newtheorem{prop}[thm]{Proposition}
\newtheorem{cor}[thm]{Corollary}
\newtheorem{defn}[thm]{Definition}
\newtheorem{ex}[thm]{Example}
\newtheorem* {note*}{Note}
\newtheorem* {con*}{Conjecture}
\newcommand{\co} {\mathbb{C}}
\newcommand{\iy} {\infty}
\newcommand{\N} {\mathbb{N}}
\newcommand{\D} {\mathbb{D}}
\newcommand{\bo}{\mathbb{D}\backslash \left\{0\right\}}
\newcommand{\U} {\mathbb{U}}
\newcommand{\uc} {\mathbb{U}^c}
\newcommand{\Z} {\mathbb{Z}}
\newcommand{\al}{\alpha}
\newcommand{\lm}{\lambda}
\newcommand{\x}{\mathcal X}
\newcommand{\B}{\mathcal B}
\newcommand{\bx}{\mathcal B(\mathcal X)}
\newcommand{\norm}[1]{\left\Vert#1\right\Vert}
\newcommand{\set}[1]{\left\{#1\right\}}
\newcommand{\seq}[1]{\left<#1\right>}
\newcommand{\brc}[1]{\left(#1\right)}
\newcommand{\LNp}{\ell^{p}(\mathbb N)}
\newcommand{\LZp}{\ell^{p}(\mathbb Z)}
\newcommand{\m}{\mathcal {M}}
\newcommand{\n}{\mathcal {N}}
\title{\bf \Large On Subspace-diskcyclicity}
\author[1]{\bf\footnotesize Nareen Bamerni \thanks{nareen\_bamerni@yahoo.com}}
\author[2]{\bf Adem K{\i}l{\i}\c{c}man \thanks{akilicman@yahoo.com}}
\affil[1,2]{\bf Department of Mathematics, University Putra Malaysia,
43400 UPM, Serdang, Selangor, Malaysia}
\begin{document}
\date{}
\maketitle

\begin{abstract}
In this paper, we define and study subspace-diskcyclic operators. We show that subspace-diskcyclicity does not imply to diskcyclicity. We establish a subspace-diskcyclic criterion and use it to find a subspace-diskcyclic operator that is not subspace-hypercyclic for any subspaces. 
Also, we show that the inverse of invertible subspace-diskcyclic operators do not need to be subspace-diskcyclic for any subspaces. Finally, we prove that every finite-dimensional separable Hilbert space over the complex field supports a subspace-diskcyclic operator.
\end{abstract}

{\bf keywords}:diskcyclic operators, Dynamics of linear operators in Banach spaces.

\section{introduction}
\label{1}
A bounded linear operator $T$ on a separable Banach space $\x$ is hypercyclic if there is a vector $x\in \x$ such that $Orb(T,x)=\set{T^nx:n\ge 0}$ is dense in $\x$, such a vector $x$ is called  hypercyclic for $T$. The first example of a hypercyclic operator on a Banach space was constructed by Rolewicz in 1969 \cite{rolewicz1969orbits}. He showed that if $B$ is the backward shift on $\LNp$ then $\lm B$ is hypercyclic if and only if $|\lm|> 1$. \\

The studying of the scaled orbit and  disk orbit are motivated by the Rolewicz example \cite{rolewicz1969orbits}. In 1974, Hilden and Wallen \cite{hilden1974some} defined the supercyclicity concept. An operator $T$ is called supercyclic if there is a vector $x$ such that the cone generated by $Orb(T,x)$ is dense in $\x$. The notion of a diskcyclic operator was introduced by Zeana \cite{zeana2002cyclic}. An operator $T$ is called diskcyclic if there is a vector $x \in \x$ such that the disk orbit $\D Orb(T,x)=\set{\al T^nx: \al\in \co, |\al|\le 1, n\in \N}$ is dense in $\x$, such a vector $x$ is called  diskcyclic for $T$. For more information about diskcyclic operators, the reader may refer to \cite{bamerni2015review} \cite{Bamerni2015414} \cite{zeana2002cyclic}. \\

In 2011,  Madore and Mart\'{i}nez-Avenda\~{n}o \cite{madore2011subspace} considered the density of the orbit in a non-trivial subspace instead of the whole space, this phenomenon is called   the subspace-hypercyclicity. An operator is called $\m$-hypercyclic or subspace-hypercyclic for a subspace $\m$ of $\x$ if there exists a vector such that the intersection of its orbit and $\m$ is dense in $\m$. They proved that subspace-hypercyclicity is infinite dimensional phenomenon. For more information on subspace-hypercyclicity, one may refer to \cite{le2011subspace} and \cite{rezaei2013notes}\\

 In 2012 Xian-Feng et al \cite{xian2012subspace} defined the subspace-supercyclic operator as follows: An operator is called $\m$-supercyclic or subspace-supercyclic for a subspace $\m$ of $\x$  if there exists a vector such that the intersection of the cone generated by its orbit and $\m$ is dense in $\m$.\\

Since both subspace-hypercyclicity and subspace-supercyclicity were studied. It is natural to define and study subspace-diskcyclicity. 
In the second section of this paper, we introduce the concept of subspace-diskcyclicity and subspace-disk transitivity. We show that not every subspace-diskcyclic operator is diskcyclic. We give the relation between all subspace-cyclicity. In particular, we give a set of sufficient conditions for an operator to be subspace-diskcyclic. We use this result to give an example of a subspace-diskcyclic which is not subspace-hypercyclic. Also, we give an example of a supercyclic operator that is not subspace-diskcyclic. Moreover, we give a simple example to show that the inverse of subspace-diskcyclic operators do not need to be subspace-diskcyclic which answers the corresponding question to \cite[Question 1]{xian2012subspace} for subspace-diskcyclicity. As a consequence of this example, we show that subspace-diskcyclicity exists on every finite dimensional Hilbert space which is not true for subspace-hypercyclicity. \\

\section{Main results}
In this paper, all Banach spaces $\x$ are infinite dimensional (unless stated otherwise) and separable over the field $\co$ of complex numbers. 
All subspaces of $\x$ are assumed to be nontrivial linear subspaces and topologically closed, and all relatively open sets are assumed to be nonempty. We will denote the closed unit disk by $\D$ and the open unit disk by $\U$.\\

\begin{defn}\label{100}
Let $T\in\bx$, and let $\m$ be a subspace of $\x$. Then $T$ is called a subspace-diskcyclic operator for $\m$ (or $\m$-diskcyclic, for short)  if there exists a vector $x$ such that $\D Orb(T,x)\cap \m $ is dense in $\m$. Such a vector $x$ is called a subspace-diskcyclic (or $\m$-diskcyclic, for short) vector for $T$.
\end{defn}
Let $\D C(T,\m)$ be the set of all $\m$-diskcyclic vectors for $T$, that is
\begin{center} $\D C(T,\m)=\{x \in \x:\D Orb(T,x)\cap \m$ is dense in $\m\}$. \end{center}
Let $\D C(\m,\x)$ be the set of all $\m$-diskcyclic operators on $\x$, that is
 $$\D C(\m,\x)=\{T \in \bx:\D Orb(T,x)\cap \m\mbox{ is dense in } \m \mbox{ for some } x\in \x \}.$$

By \cite[Theorem 2.1]{m}, every diskcyclic operator is subspace-diskcyclic; on the other hand, the next example shows that the subspace-diskcyclicity does not imply to the diskcyclicity.
\begin{ex}\label{s no d}
Suppose that $T$ is a diskcyclic operator on $\x$, and $x$ is a diskcyclic vector for $T$. Suppose that $\n=\x \oplus \{0\}$, and  $I$ is the identity operator on $\co^2$. Then, the operator $S=T \oplus I \in \B(\x \oplus  \co^2)$ is not diskcyclic on $\x \oplus \x$; otherwise, we get $I$ is diskcyclic operator on $\co^2$ (see \cite[Proposition 2.2]{bamerni2015review}) which contradicts \cite[Proposition 2.1]{bamerni2015review}. However, it is clear that $S$ is $\n$-diskcyclic operator, and $(x,0)$ is $\n$-diskcyclic vector for $S$. 
\end{ex}

From \Cref{s no d} above, it is clear that the \cite[Proposition 2.2]{bamerni2015review} can not be extended to subspace-diskcyclic operators, since $I$ can not be subspace-diskcyclic for any nontrivial subspace.

\begin{defn}\label{def-sbdtr}
Let $T\in \bx$ and $\m$ be a subspace of $\x$. Then $T$ is called subspace-disk transitive  for $\m$ (or $\m$-disk transitive, for short) if for any two  relatively open sets  $U $ and $V $ in $\m$, there exist $n \in \N$ and $\alpha \in \uc$  such that $T^{-n}(\alpha U) \cap V$ contains a relatively open subset $G$ of $\m$.  
\end{defn}
The next lemma gives some equivalent assertions to subspace-disk transitive, which will be the tool to prove several facts in this paper. 
\begin{lem}\label{20}
Let $T \in \bx$ and  $\m$ be a subspace of $\x$. Then the following assertions are equivalent:
\begin{enumerate}
\item $T$ is $\m$-disk transitive,\label{a40}
\item For any two relatively open sets $U$ and $V$ in $\m$, there exist $\alpha \in \uc$ and $n \in \N$ such that $T^{-n}(\alpha U) \cap V$ is nonempty and $T^n(\m)\subset \m.$\label{b40}
\item For any two relatively open sets $U$ and $V$ in $\m$, there exist $\alpha \in \uc$ and $n \in \N$ such that $T^{-n}(\alpha U) \cap V$ is nonempty and open in $\m$.\label{c40}
\end{enumerate}
\begin{proof}
(\ref{a40}) $\Rightarrow$ (\ref{b40}): Let $U$ and $V$ be two open subsets of $\m$. By condition (\ref{a40}), there exist $\alpha \in \uc$, $n\in\N$ and an open set $G$ in $\m$ such that $G \subset T^{-n}(\alpha U) \cap V$. It follows that 
\begin{equation}\label{102}
T^{-n}(\alpha U) \cap V \mbox{ is nonempty }.
\end{equation}
Since $ G \subset T^{-n}(\alpha U)$ it follows that $\frac{1}{\alpha}T^{n}G \subset U \subset \m$. Let $x \in \m$ and $x_0 \in G$. Then there exists  $r \in \N$ such that $(x_0 + rx) \in G$. Then, we get  
$$\frac{1}{\alpha}T^{n}x_0+\frac{1}{\alpha}T^{n}rx = \frac{1}{\alpha}T^{n}(x_0+rx) \in \frac{1}{\alpha}T^{n}G \subset \m.$$
 Since $x_0 \in G$ then $\frac{1}{\alpha}T^{n}x_0\in \frac{1}{\alpha}T^{n}G \subset \m$, it follows that $\frac{r}{\alpha}T^{n}x \in \m$ and so 
\begin{equation}\label{103}
T^{n}x \in \m.
\end{equation}
The proof follows by \Cref{102} and \Cref{103}.\\
(\ref{b40})$\Rightarrow$ (\ref{c40}): Since $T^n|_{\m}\in \B(\m)$, then $T^{-n}(\alpha U) \cap \m$ is open in $\m$ for any open set $U$ of $\m$. Since $V \subset \m$ is open, it follows that $T^{-n}(\alpha U)\cap V$ is an  open set in $\m$.\\
(\ref{c40})$\Rightarrow$ (\ref{a40}) is trivial.
\end{proof}
\end{lem}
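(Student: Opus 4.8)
The plan is to establish the cycle of implications $(\ref{a40})\Rightarrow(\ref{b40})\Rightarrow(\ref{c40})\Rightarrow(\ref{a40})$. The last of these is immediate: if $T^{-n}(\alpha U)\cap V$ is nonempty and relatively open in $\m$, then it is itself a relatively open subset $G$ of $\m$ of the kind required by the definition of $\m$-disk transitivity.

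For $(\ref{a40})\Rightarrow(\ref{b40})$ I would take the relatively open set $G\subseteq T^{-n}(\alpha U)\cap V$ supplied by $\m$-disk transitivity. This already forces $T^{-n}(\alpha U)\cap V$ to be nonempty, so the substantive point is to deduce the invariance $T^{n}(\m)\subseteq\m$. To get it, note that $G\subseteq T^{-n}(\alpha U)$ gives $\tfrac1\alpha T^{n}(G)\subseteq U\subseteq\m$. Now fix a base point $x_{0}\in G$ and let $x\in\m$ be arbitrary; since $G$ is open in the linear space $\m$, there is a nonzero scalar $t$ with $x_{0}+tx\in G$, whence $\tfrac1\alpha T^{n}(x_{0}+tx)\in\m$ by the inclusion just noted. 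Subtracting $\tfrac1\alpha T^{n}(x_{0})\in\m$ and dividing by the nonzero scalar $t/\alpha$ yields $T^{n}x\in\m$. As $x$ was arbitrary, $T^{n}(\m)\subseteq\m$, which is exactly $(\ref{b40})$.

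For $(\ref{b40})\Rightarrow(\ref{c40})$ the invariance $T^{n}(\m)\subseteq\m$ lets me regard $T^{n}|_{\m}$ as a bounded operator on $\m$. Because $U\subseteq\m$ and $\m$ is a subspace we also have $\alpha U\subseteq\m$, so $T^{-n}(\alpha U)\cap\m=(T^{n}|_{\m})^{-1}(\alpha U)$ is relatively open in $\m$ by continuity of $T^{n}|_{\m}$; intersecting with the relatively open set $V$ keeps it relatively open in $\m$, and it is nonempty by hypothesis, giving $(\ref{c40})$.

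I do not foresee a serious obstacle here. The only step carrying real content is the invariance argument inside $(\ref{a40})\Rightarrow(\ref{b40})$, and even there the key observation is elementary: an open subset of a vector space contains a short segment through a given point in every direction, so membership of the perturbed vector $x_{0}+tx$ in $G$ propagates the $T^{n}$-invariance from $G$ to all of $\m$. Everything else is routine point-set topology about restrictions of continuous maps and relative openness.
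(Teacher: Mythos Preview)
Your proposal is correct and follows essentially the same route as the paper: the same cycle $(\ref{a40})\Rightarrow(\ref{b40})\Rightarrow(\ref{c40})\Rightarrow(\ref{a40})$, with the invariance $T^{n}(\m)\subseteq\m$ obtained by perturbing a base point $x_{0}\in G$ along an arbitrary direction $x\in\m$ and using $\tfrac{1}{\alpha}T^{n}(G)\subseteq\m$. If anything, your formulation with a small nonzero scalar $t$ is cleaner than the paper's ``$r\in\N$,'' which is surely meant to be a small real scalar (or $1/r$) rather than a natural number.
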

The next theorem shows that every subspace-disk transitive operator is subspace-diskcyclic for the same subspace. First, we need the following lemma. \\

We will suppose that $\{B_k:k\in \N\}$ is a countable open basis for the relative topology of a subspace $\m$.
\begin{lem}\label{205}
Let $T$ be an $\m$-diskcyclic operator. Then
$$\displaystyle{\D C(T,\m)=\bigcap_{k\in\N}\big(\bigcup_{\stackrel{\alpha \in \uc}{n\in \N}}T^{-n}(\alpha B_k))}.$$
\begin{proof}
We have $x \in \D C(T,\m)$ if and only if $\{\alpha T^nx: n\in\N, \alpha\in \bo \}\cap \m$ is dense in $\m$ if and only if for each $k>0$, there are $\alpha \in \bo$ and $n \in \N$ such that $\alpha T^nx \in B_k$ if and only if $\displaystyle{ x\in \bigcap_{k\in\N}\big(\bigcup_{\stackrel{\alpha \in \uc}{n\in \N}}T^{-n}(\alpha B_k))}$.
\end{proof}
\end{lem}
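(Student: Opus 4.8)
The plan is to prove the set equality by a direct chain of logical equivalences, reading the definition of an $\m$-diskcyclic vector through the fixed countable basis $\{B_k:k\in\N\}$. Note first that the equivalences will hold for every $x\in\x$, so the hypothesis that $T$ be $\m$-diskcyclic is used only to guarantee that the common set is nonempty; the identity itself is a tautological rewriting. Starting from $x\in\D C(T,\m)$, which by \Cref{100} means that $\D Orb(T,x)\cap\m$ is dense in $\m$, I would rewrite density in terms of the basis: a subset of $\m$ is dense exactly when it meets every nonempty relatively open set, and since each such set is a union of the $B_k$, this is equivalent to meeting every $B_k$. Thus $x\in\D C(T,\m)$ iff for each $k\in\N$ there are $n\in\N$ and a scalar $\alpha$ with $|\alpha|\le 1$ such that $\alpha T^nx\in B_k$.

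Next I would reduce the disk of scalars from $\D$ to the punctured disk $\bo$. The only value discarded is $\alpha=0$, which contributes just the zero vector to the disk orbit. Since $\m$ is a nontrivial subspace of a topological vector space, it has no isolated point (for any nonzero $v\in\m$ the vectors $v/k$ lie in $\m\setminus\{0\}$ and tend to $0$), so every nonempty relatively open subset of $\m$ is infinite and deleting the single point $0$ cannot destroy density. Hence $\D Orb(T,x)\cap\m$ is dense in $\m$ if and only if $\{\alpha T^nx:n\in\N,\ \alpha\in\bo\}\cap\m$ is dense, and the basis condition above may be restated with $\alpha\in\bo$.

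The final step is the reciprocal change of variable. For $\alpha\in\bo$ the condition $\alpha T^nx\in B_k$ is equivalent to $T^nx\in\tfrac{1}{\alpha}B_k$, that is, to $x\in T^{-n}(\beta B_k)$ with $\beta=\tfrac{1}{\alpha}$; here $\beta B_k\subset\m$ since $\m$ is a subspace, so membership automatically lands in $\m$. Because $\alpha\mapsto 1/\alpha$ is a bijection of $\bo=\{0<|\alpha|\le 1\}$ onto $\uc=\{|\beta|\ge 1\}$, quantifying over $\alpha\in\bo$ is the same as quantifying over $\beta\in\uc$. Therefore the basis condition reads: for every $k$ there exist $n\in\N$ and $\beta\in\uc$ with $x\in T^{-n}(\beta B_k)$, i.e. $x\in\bigcap_{k\in\N}\bigcup_{\beta\in\uc,\,n\in\N}T^{-n}(\beta B_k)$. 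Since every implication in the chain is reversible, reading it in both directions gives the asserted equality.

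I do not expect a genuine obstacle, as the argument is essentially bookkeeping driven by the definitions. The one point I would take care to justify rather than leave implicit is the passage from $\D$ to $\bo$, i.e. the harmless removal of the zero scalar, which rests on the no-isolated-points observation above; everything else is the reciprocal substitution $\beta=1/\alpha$ and the characterization of density through a countable basis.
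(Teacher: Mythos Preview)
Your proof is correct and follows the same chain of equivalences as the paper's own proof: unfold the definition of $\m$-diskcyclicity via the countable basis, replace $\D$ by $\bo$, and apply the reciprocal substitution $\beta=1/\alpha$ to pass from $\bo$ to $\uc$. You are in fact more careful than the paper, which silently drops $\alpha=0$ without the no-isolated-points justification you provide.
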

\begin{thm}\label{206}
Let $T \in \bx$, and let $\m$ be a subspace of $\x$. Suppose that $T$ is $\m$-disk transitive. Then $\displaystyle{\bigcap_k\big(\bigcup_{\stackrel{\alpha \in \uc}{n\in \N}}T^{-n}(\alpha B_k))}$ is dense in $\m$.
\begin{proof}
Since $T$ is $\m$-transitive, then by \Cref{20}, for each $i,j\in \N$, there exist $n_{i,j}\in \N$ and $\alpha_{i,j} \in \uc$ such that 
$$T^{-n_{i,j}} (\alpha_{i,j}B_i)\cap B_j$$
is nonempty open in $\m$. 
Suppose that
 $$\displaystyle A_i=\bigcup_{j=1}^\iy \left( T^{-n_{i,j}}(\alpha_{i,j}B_i)\cap B_j\right)$$
for all $i\in \N$. Then $A_i$ is nonempty and open in $\m$ since it is a countable union of open sets in $\m$. Furthermore, each $A_i$ is dense in $\m$ since it intersects each $B_j$. By the Baire category theorem, we get
$$\displaystyle \bigcap_{i=1}^\iy A_i=\bigcap_{i=1}^\iy \bigcup_{j=1}^\iy \left( T^{-n_{i,j}}(\alpha_{i,j}B_i)\cap B_j \right)$$
is a dense set in $\m$. Clearly,
$$\bigcap_{i\in\N}\bigcup_{j\in\N}T^{-n_{i,j}}(\alpha_{i,j}B_i)\cap B_j \subset \bigcap_i\bigcup_{\stackrel{\alpha \in \uc}{n\in \N}} T^{-n}(\alpha B_i)\cap \m. $$
It follows that $\bigcap_{i\in\N}\bigcup_{\stackrel{\alpha \in \uc}{n\in \N}} T^{-n}(\alpha B_i)\cap \m $ is desne in $\m$. The proof is completed.
\end{proof}
\end{thm}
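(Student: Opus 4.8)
The plan is to run a Baire category argument inside $\m$, which, being a closed subspace of the Banach space $\x$, is itself a complete metric space in its relative topology. The first step is to exploit $\m$-disk transitivity through the equivalences of \Cref{20}: for each pair $i,j\in\N$, applying transitivity to the basic relatively open sets $B_i$ and $B_j$ produces $n_{i,j}\in\N$ and $\alpha_{i,j}\in\uc$ such that $T^{-n_{i,j}}(\alpha_{i,j}B_i)\cap B_j$ is nonempty and \emph{relatively open} in $\m$; the openness is precisely assertion (\ref{c40}) of \Cref{20}, and rests on the invariance $T^{n_{i,j}}(\m)\subseteq\m$ (assertion (\ref{b40})), so that $T^{n_{i,j}}|_{\m}\in\B(\m)$ and preimages of relatively open sets stay relatively open.

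Next, for each fixed $i$ I would form $A_i=\bigcup_{j\in\N}\bigl(T^{-n_{i,j}}(\alpha_{i,j}B_i)\cap B_j\bigr)$. Being a countable union of relatively open sets, $A_i$ is relatively open in $\m$; and it is dense in $\m$ because it meets every element $B_j$ of the basis, hence every nonempty relatively open subset of $\m$. The Baire category theorem then yields that $\bigcap_{i\in\N}A_i$ is dense in $\m$.

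The final step is a straightforward inclusion. Since each piece $T^{-n_{i,j}}(\alpha_{i,j}B_i)\cap B_j$ sits inside $T^{-n_{i,j}}(\alpha_{i,j}B_i)\cap\m$, we get $A_i\subseteq\bigcup_{\alpha\in\uc,\,n\in\N}T^{-n}(\alpha B_i)\cap\m$ for every $i$, and therefore $\bigcap_{i\in\N}A_i\subseteq\bigcap_i\bigl(\bigcup_{\alpha\in\uc,\,n\in\N}T^{-n}(\alpha B_i)\cap\m\bigr)$. Density of the left-hand side forces density of the right-hand side, which is exactly the assertion of the theorem.

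I expect the only real subtlety to be keeping track of the relative topology: the whole argument hinges on the sets $T^{-n}(\alpha B_i)\cap\m$ being relatively open in $\m$, which is not automatic and is guaranteed only by the invariance clause $T^n(\m)\subseteq\m$ packaged into \Cref{20}. Once that is secured, the density of each $A_i$ and the application of Baire are routine, so this is where I would spend most of the care.
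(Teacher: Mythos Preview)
Your proposal is correct and follows essentially the same route as the paper: invoke \Cref{20}(\ref{c40}) to obtain, for each pair $i,j$, a nonempty relatively open set $T^{-n_{i,j}}(\alpha_{i,j}B_i)\cap B_j$, form $A_i$ as the union over $j$, note each $A_i$ is open and dense in $\m$, apply Baire, and conclude via the obvious inclusion. Your added remark that the relative openness rests on the invariance $T^{n}(\m)\subseteq\m$ is a welcome clarification, but the argument itself matches the paper's step for step.
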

\begin{cor}\label{21}
If $T$ is  an $\m$-disk transitive operator, then $T$ is $\m$-diskcyclic.
\end{cor}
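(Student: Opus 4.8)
The plan is to read this off directly from \Cref{206} together with \Cref{205}. Concretely, I would argue as follows. Since $T$ is $\m$-disk transitive, \Cref{206} tells us that the set
$$S \;=\; \bigcap_{k\in\N}\Big(\bigcup_{\stackrel{\alpha\in\uc}{n\in\N}} T^{-n}(\alpha B_k)\Big)$$
is dense in $\m$; in particular $S$ is nonempty (recall $\m$ itself is nonempty, being a nontrivial subspace), so pick any $x\in S$.

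Next I would identify $S$ with $\D C(T,\m)$. The chain of equivalences in the proof of \Cref{205} shows that $x\in S$ iff for every $k$ there are $\alpha\in\uc$ and $n\in\N$ with $T^nx\in\alpha B_k$, i.e. with $\frac1\alpha T^nx\in B_k$ and $|\frac1\alpha|<1$; since each $B_k$ lies in $\m$ and $\{B_k\}$ is a basis for the relative topology of $\m$, this says precisely that $\D Orb(T,x)\cap\m$ meets every basic relatively open subset of $\m$, hence is dense in $\m$. Thus $x$ is an $\m$-diskcyclic vector, and $T$ is $\m$-diskcyclic.

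Nothing in this argument is deep; the main (and essentially only) subtlety is that \Cref{205} is stated under the standing hypothesis that $T$ is already $\m$-diskcyclic, so invoking it here looks slightly circular. This is harmless: the proof of \Cref{205} is a pure unpacking of the definition of an $\m$-diskcyclic vector and never uses that hypothesis, so the identity $\D C(T,\m)=S$ holds unconditionally. (In fact one gets a little more for free: the sets $A_i$ built in the proof of \Cref{206} are open and dense in $\m$, so $\bigcap_i A_i$ is a dense $G_\delta$ contained in $S=\D C(T,\m)$; hence the $\m$-diskcyclic vectors form a comeager subset of $\m$, not merely a nonempty one.)
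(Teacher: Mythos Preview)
Your argument is correct and is exactly the paper's approach: the paper's proof is the single line ``The proof follows by \Cref{205} and \Cref{206}.'' Your observation that the hypothesis in \Cref{205} is not actually used (so there is no circularity), and the bonus remark that $\D C(T,\m)$ is comeager in $\m$, are both valid elaborations beyond what the paper records.
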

\begin{proof}
The proof follows by \Cref{205} and \Cref{206}.
\end{proof}
It is clear from \Cref{100}, that every $\m$-hypercyclic operator is $\m$-diskcyclic which in turn is $\m$-supercyclic. On the other hand, the following two examples show that the reversed directions are not true ingeneral. First we need the following lemma, which extend the diskcyclic criterion to subspace-diskcyclic criterion.
\begin{lem}[$\m$-Diskcyclic Criterion]\label{sdc}
Let $T \in \bx$ and $\m$ be a subspace of $\x$. Suppose that $\seq{n_k}_{k\in\N}$ is an increasing sequence of positive integers and $D_1, D_2 \in \m$ are two dense sets in $\m$  such that
\begin{enumerate}[(a)]
\item  For every $y\in D_2$, there is a sequence $\seq{x_k}_{k\in\N}$ in $\m$ such that $\norm{x_k} \to 0$ and $T^{n_k}x_k \to y$ as $k\to\iy$,\label{a104}
\item $\left\|T^{n_k}x\right\|\left\|x_k\right\|\to 0$  for all $x \in D_1$ as $k\to\iy$,\label{b104}
\item $T^{n_k}\m\subseteq \m$ for all $k \in \N$.\label{c104}
\end{enumerate}
Then $T$ is said to be satisfied $\m$-diskcyclic criterion, and $T$ is an $\m$-diskcyclic operator.
\end{lem}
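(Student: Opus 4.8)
The plan is to derive the statement from \Cref{21} by showing that the hypotheses \ref{a104}--\ref{c104} force $T$ to be $\m$-disk transitive; in fact I will verify assertion \ref{b40} of \Cref{20}. So fix two (nonempty) relatively open sets $U$ and $V$ in $\m$; I must produce $n\in\N$ and $\alpha\in\uc$ with $T^{-n}(\alpha U)\cap V\neq\emptyset$ and $T^{n}(\m)\subseteq\m$. The invariance clause will come for free: I will only ever use exponents $n=n_{k}$, and \ref{c104} gives $T^{n_{k}}(\m)\subseteq\m$ for all such.

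Using density of $D_{1}$ and $D_{2}$, choose $v\in D_{1}\cap V$ and a \emph{nonzero} $u\in D_{2}\cap U$ (a nonempty relatively open subset of the nontrivial space $\m$ contains nonzero vectors). Apply \ref{a104} to $y=u$ to obtain $x_{k}\in\m$ with $\norm{x_{k}}\to 0$ and $T^{n_{k}}x_{k}\to u$; since $u\neq 0$ we may assume $\norm{x_{k}}>0$ for all large $k$. Apply \ref{b104} to $x=v$ and this same sequence to get $\norm{T^{n_{k}}v}\,\norm{x_{k}}\to 0$. Now perturb: put
\[
 t_{k}:=\max\!\Big(1,\ \sqrt{\norm{T^{n_{k}}v}/\norm{x_{k}}}\,\Big)\ \ge\ 1,\qquad w_{k}:=v+t_{k}x_{k}\in\m,\qquad \alpha_{k}:=1/t_{k}\in\uc .
\]
Then $t_{k}\norm{x_{k}}=\max\!\big(\norm{x_{k}},\,\sqrt{\norm{T^{n_{k}}v}\,\norm{x_{k}}}\big)\to 0$, so $w_{k}\to v$ and hence $w_{k}\in V$ for large $k$. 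Moreover $\alpha_{k}T^{n_{k}}w_{k}=\tfrac{1}{t_{k}}T^{n_{k}}v+T^{n_{k}}x_{k}$, and $\tfrac{1}{t_{k}}\norm{T^{n_{k}}v}\le\sqrt{\norm{T^{n_{k}}v}\,\norm{x_{k}}}\to 0$ while $T^{n_{k}}x_{k}\to u$, so $\alpha_{k}T^{n_{k}}w_{k}\to u\in U$. Hence for $k$ large $\alpha_{k}T^{n_{k}}w_{k}\in U$, i.e. $T^{n_{k}}w_{k}\in t_{k}U$ with $t_{k}\in\uc$, so $w_{k}\in T^{-n_{k}}(t_{k}U)\cap V$. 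This is exactly \ref{b40} of \Cref{20}; therefore $T$ is $\m$-disk transitive and, by \Cref{21}, $\m$-diskcyclic.

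The step that needs attention — the one place where diskcyclicity is genuinely more delicate than supercyclicity — is the constraint $\abs{\alpha_{k}}\le 1$, equivalently $t_{k}\ge 1$. In the supercyclic criterion one would simply take $t_{k}=\sqrt{\norm{T^{n_{k}}v}/\norm{x_{k}}}$, but this quantity may be $<1$, which is why I cap it at $1$; one then has to re-check that $\tfrac{1}{t_{k}}T^{n_{k}}v\to 0$ survives the capping, and it does, because $t_{k}=1$ only occurs when $\norm{T^{n_{k}}v}\le\norm{x_{k}}\to 0$ — indeed the uniform bound $\tfrac{1}{t_{k}}\norm{T^{n_{k}}v}\le\sqrt{\norm{T^{n_{k}}v}\,\norm{x_{k}}}$ covers both cases at once. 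The remaining points are routine: $w_{k}\in\m$ and $t_{k}U\subseteq\m$ because $\m$ is a subspace, the required invariance is \ref{c104}, and $\norm{x_{k}}$ is eventually nonzero precisely because $u$ was chosen nonzero. (One could instead bypass \Cref{21} and run a direct Baire-category argument on the description of $\D C(T,\m)$ from \Cref{205}, but routing through transitivity is cleaner.)
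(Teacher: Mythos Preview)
Your proof is correct and follows the same route as the paper: establish assertion~\ref{b40} of \Cref{20} and then apply \Cref{21}. Your scalar choice $t_k=\max\bigl(1,\sqrt{\norm{T^{n_k}v}/\norm{x_k}}\bigr)$ is in fact tidier than the paper's, which splits into cases (according to whether $\norm{T^{n_k}x}$ or $\norm{x_k}$ vanishes) and, when $\norm{T^{n_k}x}\to 0$, detours through $\m$-hypercyclicity; your capping-at-$1$ trick, together with the observation that $u\neq 0$ forces $\norm{x_k}>0$ eventually, handles all cases in one stroke. One harmless slip: you declare $\alpha_k:=1/t_k\in\uc$, but since $t_k\ge 1$ this scalar lies in $\D$, not $\uc$; the argument is unaffected because in the decisive line you correctly use $t_k\in\uc$ as the dilation factor appearing in $T^{-n_k}(t_k U)\cap V$.
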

\begin{proof}
To show that $T$ is $\m$-diskcyclic operator, we will use the same lines as in the proof of \cite[Theorem 1.14]{bayart2009dynamics}. Let $U_1$ and $U_2$ be two  relatively open sets in $\m$. Then we can find $x \in D_1 \cap U_1$ and $y\in D_2 \cap U_2$ since both $D_1$ and $D_2$ are dense in $\m$. It follows from the condition \ref{b104} that there exists a sequence of non-zero scalars $\seq{\lm_k}_{k\in\N}$ such that $\lm_kT^{n_k}x\to 0$ and $\lm_k^{-1}x_k\to 0$. Suppose that $\left\|T^{n_k}x\right\|$ and $\left\|x_k\right\|$ are not both zero. Then, we have the following cases: 
\begin{enumerate}[(1)]
\item if $\left\|T^{n_k}x\right\|\left\|x_k\right\|\neq 0$, set $\lm_k=\left\| x_k\right\|^{\frac{1}{2}}{\left\|T^{n_k}x \right\|}^{-\frac{1}{2}}$,
\item if $\left\|x_k\right\|= 0$, set $\lm_k=2^{-k}\norm{T^{n_k}x}^{-1}$,
\item if $\left\|T^{n_k}x\right\|=0$, set $\lm_k=2^k\norm{x_k}$. 
\end{enumerate}
Indeed, for the last case when  $\left\|T^{n_k}x\right\|=0$, $T$ turns to be $\m$-hypercyclic \cite[Theorem 3.6]{madore2011subspace} and thus $\m$-diskcyclic. Also, for the first two cases if $\left\|T^{n_k}x \right\|\to 0$, then $T$ is $\m$-hypercyclic. Otherwise, it follows easily that $|\lm_k|\le 1$ for all $k\in \N$.
Set $z=x+{\lm_k}^{-1}x_k$ for a large enough $k$. Since $x\in U_1 \subset \m$ and ${\lm_k}^{-1}x_k \in \m$, then $z \in \m$. Since 
$$\norm{z-x}\to 0,$$
it follows that $z\in U_1$.\\
Now, since $\lm_k T^{n_k}z=\lm_k T^{n_k}x+T^{n_k}x_k$, then by using the condition \ref{c104}  both $\lm_k T^{n_k}z$ and $T^{n_k}x_k$ belong to $\m$ and so $\lm_k T^{n_k}x \in \m$.  Moreover, since  $T^{n_k}x_k \to y$  for a large enough $k$, then 
$$\norm{\lm_k T^{n_k}z-y}\to 0.$$
Thus $\lm_k T^{n_k}z \in U_2$. It follows that there exist $k\in\N$ such that $U_1\cap T^{-n_k}\left(\lm_k^{-1}U_2\right)\neq \phi$. 
By \Cref{20} and \Cref{21}, $T$ is an $\m$-diskcyclic operator.\\
\end{proof}
The following lemma can be proved by the same lines as in the proof of \cite[Lemma 3.1.]{feldman2003hypercyclicity} and \cite[Lemma 3.3.]{feldman2003hypercyclicity} respectively.

\begin{lem}\label{35}
Let $T$ be an invertible bilateral weighted shift on $\LZp$ and $\seq{n_k}_{k\in\N}$ be an increasing sequence of positive integers. Suppose that $\m$ is a subspace of $\LZp$ with the canonical basis $\set{e_{m_i}:i\in\N, {m_i}\in \Z}$ such that $T^{n_k}\m \subseteq \m$. If there exists an $i,j\in \N$ such that $T^{n_k}e_{m_i}\to 0$ ($\norm{T^{n_k}e_{m_i}}\norm{B^{n_k}e_{m_j}}\to 0$) as $k\to \iy$, then $T^{n_k}e_{m_r}\to 0$ (or $\norm{T^{n_k}e_{m_r}}\norm{B^{n_k}e_{m_p}}\to 0$, respectively) for all $r,p\in \N$  
\end{lem}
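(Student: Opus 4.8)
The plan is to reduce the whole statement to a single uniform estimate --- namely that the ratios $\norm{T^{n}e_m}/\norm{T^{n}e_{m'}}$ are trapped between two positive constants depending only on $|m-m'|$ and \emph{not} on $n$ --- and then to read off both conclusions from it by putting $n=n_k$ and letting $k\to\iy$. It is precisely the invertibility of $T$ that makes such an $n$-free bound available. First I would fix notation: writing $\seq{w_s}_{s\in\Z}$ for the weight sequence of $T$, one has on the canonical basis $T^{n}e_{m}=\big(\prod_{s=m}^{m+n-1}w_s\big)e_{m+n}$, hence $\norm{T^{n}e_m}=\prod_{s=m}^{m+n-1}|w_s|$; and since $T$ is invertible, $B:=T^{-1}$ is again a bilateral weighted shift, with $B^{n}e_m=\big(\prod_{s=m-n}^{m-1}w_s\big)^{-1}e_{m-n}$ and $\norm{B^{n}e_m}=\prod_{s=m-n}^{m-1}|w_s|^{-1}$. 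Boundedness of $T$ and of $T^{-1}$ on $\LZp$ gives $M:=\sup_s|w_s|<\iy$ and $\delta:=\inf_s|w_s|>0$.

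The key step is then an elementary telescoping of finite products. Fix $m,m'\in\Z$, put $d=m'-m$, and take $n\ge|d|$ (all that matters, since $n_k\to\iy$). Cancelling the long common block of weights one gets, for $d\ge 0$,
$$\frac{\norm{T^{n}e_{m'}}}{\norm{T^{n}e_m}}=\frac{\prod_{s=m+n}^{m+n+d-1}|w_s|}{\prod_{s=m}^{m+d-1}|w_s|},\qquad \frac{\norm{B^{n}e_{m'}}}{\norm{B^{n}e_m}}=\frac{\prod_{s=m-n}^{m-n+d-1}|w_s|}{\prod_{s=m}^{m+d-1}|w_s|},$$
and the mirror identities when $d<0$ (numerator and denominator exchanging roles). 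In every case the right-hand side is a quotient of two products of exactly $|d|$ factors, each factor lying in $[\delta,M]$, so
$$(\delta/M)^{|d|}\ \le\ \frac{\norm{T^{n}e_{m'}}}{\norm{T^{n}e_m}},\ \ \frac{\norm{B^{n}e_{m'}}}{\norm{B^{n}e_m}}\ \le\ (M/\delta)^{|d|}.$$

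With this bound in hand the two assertions follow at once. Taking $n=n_k$, $m=m_i$, $m'=m_r$ gives $\norm{T^{n_k}e_{m_r}}\le(M/\delta)^{|m_r-m_i|}\norm{T^{n_k}e_{m_i}}$, so $T^{n_k}e_{m_i}\to 0$ forces $T^{n_k}e_{m_r}\to 0$ for every $r$. For the supercyclic-type condition I would multiply $\norm{T^{n_k}e_{m_r}}\le(M/\delta)^{|m_r-m_i|}\norm{T^{n_k}e_{m_i}}$ by $\norm{B^{n_k}e_{m_p}}\le(M/\delta)^{|m_p-m_j|}\norm{B^{n_k}e_{m_j}}$, obtaining
$$\norm{T^{n_k}e_{m_r}}\,\norm{B^{n_k}e_{m_p}}\ \le\ (M/\delta)^{|m_r-m_i|+|m_p-m_j|}\,\norm{T^{n_k}e_{m_i}}\,\norm{B^{n_k}e_{m_j}}\ \longrightarrow\ 0 .$$
Note the hypothesis $T^{n_k}\m\subseteq\m$ is not used in any of these norm estimates; its role is the bookkeeping one of keeping the vectors $T^{n_k}e_{m_i}$ (equivalently, the basis vectors $e_{m_i+n_k}$) inside $\m$, which is what one needs when the lemma is fed into the $\m$-diskcyclic criterion of \Cref{sdc}. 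I do not anticipate a genuine obstacle here: the only points that call for a little care are keeping the index ranges in the telescoped products straight according to the sign of $d$, and invoking the standard fact that a bilateral weighted shift is invertible exactly when its weights are bounded and bounded away from $0$.
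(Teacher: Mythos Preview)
Your argument is correct and is essentially the approach the paper has in mind: the paper's proof merely cites \cite[Lemmas 3.1 and 3.3]{feldman2003hypercyclicity}, and Feldman's proofs proceed exactly via the telescoping-product estimate $(\delta/M)^{|m-m'|}\le\norm{T^{n}e_{m'}}/\norm{T^{n}e_m}\le(M/\delta)^{|m-m'|}$ that you spell out. Your remark that the invariance hypothesis $T^{n_k}\m\subseteq\m$ plays no role in the norm estimates themselves (it is only needed later when plugging the lemma into \Cref{sdc}) is also accurate.
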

\begin{proof}
Since $T^{n_k}\m \subseteq \m$, the proof is similar to the proof of \cite[Lemma 3.1.]{feldman2003hypercyclicity} and \cite[Lemma 3.3.]{feldman2003hypercyclicity}.
\end{proof}
Now, the next example shows that $\m$-diskcyclicity does not imply to $\m$-hypercyclicity.
\begin{ex}
Let $F:\LZp \rightarrow \LZp$ be a bilateral weighted forward shift operator, defined by $F(e_n) =w_ne_{n+1}$ for all $n\in \Z$, where
\begin{equation*}
w_n= 
\begin{cases} 3 & \text{if $n \geq 0$,}
\\
4 &\text{if $n< 0$.}
\end{cases}
\end{equation*}
Let $\m$ be the subspace of $\LZp$ defined as follows:
 $$\m=\left\{\seq{a_n}_{n=-\iy}^{\iy}\in \LZp : a_{2n}=0, n \in \Z\right\},$$
then $F$ is an $\m$-diskcyclic operator, not $\m$-hypercyclic.
\end{ex}
\begin{proof}
We will apply $\m$-diskcyclic criterion to give the proof. Let $D=D_1 = D_2$ be dense subsets of $\m$, consisting of all sequences with finite support. Let $n_k= 2k$ for all $k\in \N$. 
It is clear that the set $C=\set{e_{m}:m\in O}$ is the canonical basis for $\m$, where $O$ is the set of all odd integer numbers. Let $x,y\in D$, then $x=\sum_{i\in O} x_{i} e_i$ and $y=\sum_{i\in O} y_{i} e_i$, where $x_i,y_i\in\co$ for all $i\in O$.

Let $B$ be a bilateral weighted backward shift on $\LZp$ defined by  $Be_n=z_ne_{n-1}, n\in \Z$, where
\begin{equation*}
z_n= 
\begin{cases} \frac{1}{3} & \text{if $n > 0$,}
\\
\frac{1}{4} &\text{if $n \le 0$.}
\end{cases}
\end{equation*}

Suppose that $x_k=B^{2k}y$ for all $k\in\N$. 
Since $|w_n|\ge 4$ and $|z_n|\ge 1/4$ for all $n\in\Z$, then $F$ and $B$ are invertible with $F^{-1}=B$. Since $B$ and $T$ are linear and invertible, then it is sufficient by triangle inequality and \Cref{35}  to assume that $x=y = e_1$.
Since
 $$\displaystyle B^{2k}e_1=\brc{\prod_{j=0}^{1-2k}z_j}e_{1-2k},$$
it is clear that $\norm{B^{2k}e_1}=\frac{1}{4^{2k}} \to 0$ as $k\to \iy$. Hence
\begin{equation}\label{106}
\norm{x_k} \to 0 \mbox{ as } k\to \iy.
\end{equation}
It is easy to show that for a large enough $k$, 
\begin{equation}\label{105}
F^{2k}x_k=y. 
\end{equation}
It follows from \Cref{106} and \Cref{105} that the condition \ref{a104} in \Cref{sdc} holds. \\
Moreover, we have
$$\displaystyle\norm{F^{2k}e_1} \norm{B^{2k}e_1}=\norm{\prod_{j=1}^{2k}w_j}\norm{\prod_{j=0}^{1-2k}z_j}=\left(\frac{3}{4}\right)^{2k}\to 0,$$
as $k\to \iy$.  Hence the condition \ref{b104} in \Cref{sdc} holds.
It can be easily deduced from the definition of $\m$ that  for each $x \in \m$ and each $k \in \N$, the sequence $F^{2k}x$ will have a zero entry on all even positions, that is 
$$F^{2k}x \in \m.$$
It follows that the condition \ref{c104} in \Cref{sdc} holds.
Thus $F$ is an $\m$-diskcyclic operator.\\
Note that the operator $F$ is clearly not $\m$-hypercyclic since
 $$\displaystyle\norm{F^{n_k}e_i}=\norm{\prod_{j=i}^{i+n_k-1}w_j}\to \iy.$$ 
for any increasing sequence $\seq{n_k}_{k\in\N}$ of positive integers, and any $i\in \Z$, that is, its orbit can not be dense in any subspace.
\end{proof}
The next simple example shows that $\m$-supercyclicity does not imply to $\m$-diskcyclicity.
\begin{ex}
Let $I$ be the identity operator on the space $\co^k$ for some $k\ge 2$, and let $\m$ be a subspace of $\co^k$. Then it is clear that $\co Orb(I,x)\cap \m$ is dense in $\m$ for some vector $0\neq x\in \co^k$, that is, $I$ is $\m$-supercyclic. However, $\D Orb(I,x)\cap \m$ can not be dense in $\m$ for any $x\in \co^k$, that is, $I$ is not $\m$-diskcyclic. 
\end{ex}

The following example gives several useful consequences, some of them answering the corresponding questions to \cite[Question 3.3]{xian2012subspace}, \cite[Question 1]{madore2011subspace} and \cite[Question 1]{rezaei2013notes}, but for subspace-diskcyclicity.
\begin{ex}\label{12}
Let  $T=kx \in \B(\co^n)$, $k\in \D^c, n\ge 2$. Let $\m=\{y:y=(a,0,0,\cdots,0), y\in \co^n\}$ be a subspace of $\co^n$. Then
\begin{enumerate}
\item $T$ and $T^*$ are $\m$-diskcyclic operators, \label{a50}
\item $T^{-1}$ is not subspace-diskcyclic operator for any subspace,\label{b50}
\item There is some vector $x\in \co^n$ such that $\D Orb(T^{-1},x)$ is somewhere dense in $\m$, but not everywhere dense in $\m$. \label{c50}
\end{enumerate}
\end{ex}
\begin{proof}
For (\ref{a50}), let $x=(1,0,0,\cdots,0)$, then 
$$\D Orb(T,x)\cap \m=\set{\brc{\al k^n,0,0,\cdots,0}:\al\in \D, n\ge 0}.$$
Let $z=(b,0,0,\cdots,0)\in \m$, and let us choose an $m\in \N$ such that $|k^m|\ge |b|$. Then it is clear that $z=\left(k^m\brc{\frac{b}{k^m}},0,0,\cdots,0\right)\in \D Orb(T,x)\cap \m$. It follows that $T$ is an $\m$-diskcyclic operator. By the same way, we can show that $T^*=\bar{k}x$ is $\m$-diskcyclic.\\

For (\ref{b50}), since $T^{-1}x={\frac{1}{2}}x$ then $\D Orb(T^{-1},x)$ is bounded for all $x\in \co^n$, and hence $T^{-1}$ can not be dense in any proper subspace of $\co^n$. Thus, $T^{-1}$ is not $\m$-diskcyclic.\\

For (\ref{c50}), let $x=(1,0,0,\cdots,0)$, then $Int\brc{\overline{\D Orb(T^{-1},x)\cap \m}}=\{(y,0,0,\cdots,0):y\in \co,|y|<1\}\neq \phi$. Therfore, $\D Orb(T^{-1}, x)$ is somewhere dense in $\m$. However, by part (\ref{b50}) $\D Orb(T^{-1},x)$ is not everywhere dense in $\m$.

\end{proof}
It follows from the above example that compact and hyponormal subspace-diskcyclic operators exist on $\co$.
Since every two $n$-dimensional Hilbert spaces over the scalar complex field are isomorphic. Then from \Cref{12} above, one may easily conclude the following proposition.
\begin{prop}\label{22} 
There are subspace-diskcyclic operators on every finite dimensional Hilbert space over the scalar field $\co$,
\end{prop}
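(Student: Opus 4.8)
The plan is to reduce the statement to what has already been established in \Cref{12}. Let $\h$ be any finite dimensional Hilbert space over $\co$, say $\dim \h = n$. If $n = 1$, then the final remark before \Cref{12} already records that compact, hyponormal subspace-diskcyclic operators exist on $\co$, so that case is immediate. If $n \ge 2$, the idea is that $\h$ is (linearly) isometrically isomorphic to $\co^n$ with its standard inner product, since any two Hilbert spaces of the same finite dimension over $\co$ are isomorphic via a map $\Phi : \co^n \to \h$ that carries an orthonormal basis to an orthonormal basis.

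The key steps, in order: first, fix such an isomorphism $\Phi : \co^n \to \h$. Second, take the operator $T = kI$ on $\co^n$ with $k \in \D^c$ (here $I$ is the identity on $\co^n$; this is the operator of \Cref{12}, written $T = kx$ there) and the subspace $\m = \{(a,0,\dots,0) : a \in \co\}$ of $\co^n$, for which part (\ref{a50}) of \Cref{12} gives that $T$ is $\m$-diskcyclic with $\m$-diskcyclic vector $x_0 = (1,0,\dots,0)$. Third, transport everything through $\Phi$: set $S = \Phi T \Phi^{-1} \in \B(\h)$ and $\n = \Phi(\m)$, which is a (closed, nontrivial, since $n\ge 2$) subspace of $\h$. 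Fourth, observe that $\Phi$ intertwines the disk-orbit structures: $S^m = \Phi T^m \Phi^{-1}$ for all $m$, and $\Phi$ is linear, so $\D Orb(S, \Phi x_0) = \Phi\big(\D Orb(T, x_0)\big)$, hence $\D Orb(S,\Phi x_0) \cap \n = \Phi\big(\D Orb(T,x_0)\cap \m\big)$. Since $\Phi$ is a homeomorphism (being a linear isomorphism between finite-dimensional spaces), it maps dense subsets of $\m$ to dense subsets of $\n$, so $\D Orb(S,\Phi x_0)\cap \n$ is dense in $\n$. Therefore $S$ is $\n$-diskcyclic, which proves the proposition.

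I do not anticipate a genuine obstacle here; the only point requiring a small amount of care is the purely formal verification that a linear homeomorphism preserves all the relevant notions — density, the disk orbit $\set{\al S^m y : \al \in \D, m \in \N}$, and closedness/nontriviality of the transported subspace — but each of these is routine once one writes $S = \Phi T\Phi^{-1}$ and uses linearity of $\Phi$ together with $|\al|\le 1$ being preserved under the scalar action. One should also remember to handle the degenerate case $n=1$ separately (as above), since the subspace $\m$ used in \Cref{12} is trivial when $n=1$; this is why the statement is phrased for \emph{every} finite dimensional Hilbert space and the $n=1$ case is supplied by the preceding remark rather than by \Cref{12} itself.
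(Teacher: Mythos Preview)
Your argument is essentially the same as the paper's: the paper simply observes that any two $n$-dimensional complex Hilbert spaces are isomorphic and concludes the proposition directly from \Cref{12}, which is exactly the transport-by-isomorphism you spell out in detail. One small correction: the remark about compact and hyponormal subspace-diskcyclic operators existing on $\co$ appears \emph{after} \Cref{12}, not before it; and note that under the paper's standing convention that subspaces be nontrivial, the case $n=1$ is in fact vacuous (there are no proper nonzero subspaces of $\co$), so the proposition should really be read for $n\ge 2$ --- the paper does not treat $n=1$ separately either.
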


\end{document}